\newtheorem{thm}{Theorem}
\newtheorem{lem}[thm]{Lemma}
\newtheorem{que}[thm]{Question}
\newtheorem{prop}[thm]{Proposition}
\theoremstyle{definition}
\newtheorem{rem}[thm]{Remark}
\numberwithin{equation}{section}
\begin{document}

\begin{center}
{\large\bf  A Note on $\aleph_{0}$-injective Rings}

\vspace{0.8cm} { Liang Shen \\
 {\it Department of Mathematics, Southeast University
 \\ Nanjing 210096, P.R. China}\\}

 {\it E-mail: lshen@seu.edu.cn
 }\\
\end{center}

\bigskip

\noindent{\bf Abstract:} A ring $R$ is called right
$\aleph_{0}$-injective if every right homomorphism from
 a countably generated right ideal of $R$ to $R_{R}$ can be extended to a homomorphism from $R_{R}$ to $R_{R}$.
  In this note, some characterizations of $\aleph_{0}$-injective rings are given. It is proved that
  if $R$ is semiperfect, then $R$ is right $\aleph_{0}$-injective if and only  if every homomorphism from a countably
generated small right ideal of $R$ to $R_{R}$ can be extended to one
from $R_{R}$ to $R_{R}$. It is also shown that if $R$ is
  right noetherian and left $\aleph_{0}$-injective, then $R$ is \emph{QF}. This result can be looked as
   an approach to the Faith-Menal conjecture.  \\

\bigskip

 \noindent {\bf Key Words:}\indent   $\aleph_{0}$-injective rings;
Faith-Menal conjecture;\\ Quasi-Frobenius rings.

 \bigskip

 \noindent {\bf(2000) Mathematics Subject Classification:} \indent16D50; 16L60.

\bigskip
\section{\bf INTRODUCTION}
 \indent Throughout this paper rings are associative with
identity.  Write $J$ and $S_l$ for the Jacobson radical and the left
socle  of a ring $R$ respectively. Use $N\subseteq^{ess} M$ to mean
that $N$ is an essential submodule of $M$. For a subset $X$ of a
ring $R$, the left annihilator of $X$ in $R$ is ${\bf l}(X)=\{r\in
R: rx=0$ for all $x\in X\}$. Right annihilators are defined
analogously. $f=c\cdot $ means that $f$ is a homomorphism multiplied
by an element
$c$ on the left side.\\
\indent It is mentioned in \cite{T98} that a ring $R$ is called
right
 \emph {$\aleph_{0}$-(or countably) injective} if every right homomorphism from
 a countably generated right ideal of $R$ to $R_{R}$ can be extended to a homomorphism from $R_{R}$ to
 $R_{R}$. Recall that a ring $R$ is called right \emph{F-injective} if every right homomorphism from
 a finitely generated right ideal of $R$ to $R_{R}$ can be extended to one from $R_{R}$ to
 $R_{R}$. And a right \emph{FP-injective} ring $R$ satisfies that  every right homomorphism from
 a finitely generated submodule of a free right $R$-module $F_{R}$ to $R_{R}$ can be extended to one from $F_{R}$ to
 $R_{R}$. The left side of the above injectivities can be defined similarly. It is obvious that right self-injective rings are right
$\aleph_{0}$-injective
   and right $\aleph_{0}$-injective rings are right
 $F$-injective. But neither of the converses is true (see \cite[Example
 10.46]{T98}). The example also shows that a right \emph{FP}-injective
 ring may not be right $\aleph_{0}$-injective. But it is still unknown
 whether a right $F$-injective ring is right \emph{FP}-injective.
We have the following arrow diagrams on injectivities of rings:
\begin{center}
{\footnotesize right self-injectivity $^{\Rightarrow}_{\nLeftarrow}$
right $\aleph_{0}$-injectivity $^{\Rightarrow}_{\nLeftarrow}$ right
 $F$-injectivity $^{?\Rightarrow}_{\Leftarrow}$ right
 \emph{FP}-injectivity,
 }\end{center}
\begin{center}{\footnotesize \indent right self-injectivity
$^{\Rightarrow}_{\nLeftarrow}$ right \emph{FP}-injectivity
$^{\nRightarrow}_{\Leftarrow?}$ right
 $\aleph_{0}$-injectivity.   }\end{center}
 Recall that a ring $R$ is \emph{quasi-Frobenius} (\emph{QF})
if $R$ is one-sided noetherian and one-sided self-injective. There
are three unresolved Faith conjectures on \emph{QF} rings (see
\cite{NY03}). One of them is the Faith-Menal conjecture, which was
raised by Faith and Menal in \cite{FM94}. The conjecture says that
every strongly right Johns ring is \emph{QF}. A ring $R$ is called
 \emph{right Johns} if $R$ is right noetherian  and every right ideal of $R$ is
 a right annihilator.  $R$ is called \emph{strongly right Johns} if the
 matrix ring M$_{n}$($R$) is right Johns for all  $n\geq 1$. In \cite{J77}, Johns used a
false result of Kurshan \cite[Theorem 3.3]{K70} to show that right
Johns rings are right artinian. Later in \cite{FM92}, Faith and
Menal gave a counter example to show that right Johns rings may not
be right artinian. They characterized strongly right Johns rings as
right noetherian and left \emph{FP}-injective rings (see
\cite[Theorem 1.1]{FM94}). So the Faith-Menal conjecture is
equivalent to say that every right noetherian and left
\emph{FP}-injective ring is \emph{QF}. In this short article, some
characterizations of $\aleph_{0}$-injective rings are explored. It
is proved in Theorem 9 that if $R$ is semiperfect, then $R$  is
right $\aleph_{0}$-injective if and only  if every homomorphism from
a countably generated small right ideal of $R$ to $R_{R}$ can be
extended to one from $R_{R}$ to $R_{R}$. Since \emph{FP-}injectivity
implies $F$-injectivity, it is unknown whether a right noetherian
and left $F$-injective ring is \emph{QF}. It is proved in Theorem 14
that a right noetherian and left $\aleph_{0}$-injective ring is
\emph{QF}. This result can be looked as an approach to the
Faith-Menal conjecture.

\bigskip

\section{\bf RESULTS}

\medskip

First we explore some basic characterizations of
$\aleph_{0}$-injective rings.
\begin{prop}\label{prop 1}
A direct product of rings R = $\prod_{i\in I} R_{i}$ is right
$\aleph_{0}$-injective if and only if $ R_{i}$ is right
$\aleph_{0}$-injective, $\forall i\in I$.
\end{prop}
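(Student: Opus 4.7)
The proof is a standard ``componentwise'' argument using the central idempotents $e_i \in R$ (with $1$ in coordinate $i$ and $0$ elsewhere) that arise from the product decomposition. Since each $e_i$ is central, multiplication by $e_i$ commutes with every $R$-homomorphism, and the assignment $x \mapsto xe_i$ identifies $Re_i$ with $R_i$.

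For the backward direction, assume each $R_i$ is right $\aleph_0$-injective, and let $f : A \to R$ be an $R$-homomorphism from a countably generated right ideal $A$ of $R$. I would set $A_i := Ae_i$; because $e_i$ is central, $A_i$ is a right ideal of $R_i = Re_i$, and the countably generated hypothesis on $A$ passes to each $A_i$. The restriction $f_i := f|_{A_i}$ lands in $Re_i = R_i$ (again using centrality of $e_i$) and is $R_i$-linear, so by hypothesis there is $c_i \in R_i$ with $f_i = c_i \cdot$ on $A_i$. The element $c := (c_i)_{i \in I} \in R$ then satisfies $ca = f(a)$ for every $a \in A$, since both sides agree in each coordinate: $(ca)e_i = c_i(ae_i) = f_i(ae_i) = f(a)e_i$. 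Thus $f$ extends to left multiplication by $c$ on $R_R$.

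For the forward direction, fix $i \in I$, let $A_i$ be a countably generated right ideal of $R_i$, and let $f_i : A_i \to R_i$ be an $R_i$-homomorphism. I would view $A_i$ as a right ideal of $R$ through the embedding $R_i = Re_i \hookrightarrow R$; a generating set for $A_i$ over $R_i$ also generates $A_i$ as a right $R$-ideal, and $f_i$ is automatically $R$-linear because $a\cdot r = a \cdot re_i$ for $a \in A_i \subseteq Re_i$. By hypothesis, there exists $c \in R$ with $f_i(a) = ca$ for all $a \in A_i$. Setting $c_i := ce_i \in R_i$ and using $a = ae_i$ gives $c_i a = ce_ia = ca = f_i(a)$, so $f_i$ extends to $R_i$.

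The argument is essentially bookkeeping; the only mild subtlety is checking that the $R$-linearity and the $R_i$-linearity translate cleanly through the embedding $R_i \hookrightarrow R$, which is exactly where the centrality of the $e_i$'s is used. No set-theoretic issue arises with infinite $I$ since the tuple $(c_i)_{i \in I}$ always defines a legitimate element of the product $R$.
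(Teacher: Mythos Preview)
Your proof is correct and follows essentially the same componentwise strategy as the paper. The only difference is notational: you work with the central idempotents $e_i$, while the paper uses the projection and inclusion maps $\pi_i$, $\iota_i$; under the identification $R_i \cong Re_i$ these are interchangeable, and the key verifications (that $Ae_i$ is countably generated over $R_i$, that the restricted maps are $R_i$-linear, and that the tuple $(c_i)_{i\in I}$ realizes the extension) are the same in both write-ups.
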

\begin{proof}

For $i\in I$, let $\pi_{i}$ and $\iota_{i}$ be the  $i$th projection
map and the $i$th inclusion map canonically. If $R$ is right
$\aleph_{0}$-injective, for each $i$, suppose $f_{i} :
T_{i}\rightarrow R_{i}$ is $R_{i}$-linear where $T_{i}$ is a
countably generated right ideal of $R_{i}$. Then the map $0\times
\cdots\times T_{i}\times \cdots\times 0\rightarrow 0\times
\cdots\times R_{i}\times \cdots\times 0$ given by $(0, \cdots,t_{i},
\cdots,0)\longmapsto (0,\cdots, f _{i}(t_{i}), \cdots, 0)$ is
$R$-linear with $0\times \cdots\times T_{i}\times \cdots\times 0$ a
countably generated  right ideal of $R$. So it has the form $c\cdot$
where $c\in R$. Thus $f_{i}=\pi_{i}(c)\cdot$. Conversely, let
$\gamma : T\rightarrow R$ be $R$-linear, where $T$ is a countably
generated right ideal of $R$. For each $i\in I$, let $T_{i}=\{x\in
R_{i}~|~ \iota_{i}(x)\in T\}$. Since $T$ is countably generated,
 $T=\sum_{k=1}^{\infty}a_{k}R$, where $a_{k}\in R$, $k=1,2,\cdots$.
 Then it is easy to prove that $T_{i}=\sum_{k=1}^{\infty}\pi_{i}(a_{k})R_{i}$ is a
countably generated right ideal of $R_{i}$, $\forall i\in I$. Now
define $\gamma_{i}: T_{i}\rightarrow R_{i}$ by
$\gamma_{i}(x)=\pi_{i}\gamma(\iota_{i}(x))$, $x\in T_{i}$. Since
$R_{i}$ is right  $\aleph_{0}$-injective, there exists $c_{i}\in
R_{i}$ such that $\gamma_{i}=c_{i}\cdot$. For each $\bar t=\langle
t_{i}\rangle \in T$, write $\gamma(\bar t)=\bar s=\langle
s_{i}\rangle$. Since $T$ is a right ideal of $R$, $t_{i}\in T_{i},
\forall i\in I$. Thus $s_{i}=\pi_{i}(\bar
s\cdot\iota_{i}(1_{i}))=\pi_{i}(\gamma(\bar
t)\cdot\iota_{i}(1_{i}))=\pi_{i}\gamma(\bar
t\cdot\iota_{i}(1_{i}))=\pi_{i}\gamma(\iota_{i}(t_{i}))=\gamma_{i}(t_{i})=c_{i}t_{i}$,
whence $\bar s=\langle c_{i}\rangle\cdot\bar t $. So $\gamma=\langle
c_{i}\rangle\cdot$. This shows that $R$ is right
$\aleph_{0}$-injective.

\end{proof}

\begin{prop}\label{prop 2}
If R is right $\aleph_{0}$-injective, then {\bf l}{\rm(}$I\cap
K${\rm)}={\bf l}{\rm(}I{\rm)}+{\bf l}{\rm(}K{\rm)}, where I and K
are countably generated  right ideals of R.
\end{prop}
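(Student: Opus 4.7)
The plan is to prove the nontrivial inclusion $\mathbf{l}(I\cap K)\subseteq \mathbf{l}(I)+\mathbf{l}(K)$ by producing, for each $c\in\mathbf{l}(I\cap K)$, a decomposition $c=(c-d)+d$ with $(c-d)I=0$ and $dK=0$. The reverse inclusion $\mathbf{l}(I)+\mathbf{l}(K)\subseteq\mathbf{l}(I\cap K)$ is immediate, since any element of the right-hand side plainly annihilates $I\cap K$.

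To build the element $d$, I would fix $c\in\mathbf{l}(I\cap K)$ and define a map $\varphi:I+K\to R$ by $\varphi(i+k)=ci$ for $i\in I$, $k\in K$. The first thing to check is that $\varphi$ is well-defined: if $i+k=i'+k'$, then $i-i'=k'-k\in I\cap K$, so $c(i-i')=0$ by hypothesis, giving $ci=ci'$. Additivity and right $R$-linearity are then clear, so $\varphi$ is a genuine right $R$-homomorphism from $I+K$ to $R_R$.

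Next I would invoke right $\aleph_0$-injectivity applied to the domain $I+K$. Since $I$ and $K$ are each countably generated, so is their sum $I+K$ (concatenate the two countable generating sets). Hence $\varphi$ extends to a homomorphism $R_R\to R_R$, which by the usual identification must be left multiplication by some element $d\in R$, i.e.\ $\varphi=d\cdot$. Reading this equation on $K$ gives $dk=\varphi(k)=c\cdot 0=0$ for all $k\in K$, so $d\in\mathbf{l}(K)$; reading it on $I$ gives $di=\varphi(i)=ci$ for all $i\in I$, so $(c-d)I=0$ and $c-d\in\mathbf{l}(I)$. Therefore $c=(c-d)+d\in\mathbf{l}(I)+\mathbf{l}(K)$, as required.

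There is no real obstacle here; the only subtle point is ensuring $\varphi$ is well-defined, which is exactly where the assumption $c\in\mathbf{l}(I\cap K)$ is used. The choice of the domain $I+K$ (rather than, say, trying to extend a map off $I$ alone) is the key idea, because it simultaneously encodes the prescribed behaviour on $I$ and the required vanishing on $K$, and is still countably generated so that $\aleph_0$-injectivity applies.
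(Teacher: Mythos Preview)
Your argument is correct and matches the paper's proof essentially line for line: both define the map $I+K\to R$ by $i+k\mapsto ci$ (the paper uses $x$ for your $c$), check well-definedness via $c\in\mathbf{l}(I\cap K)$, invoke $\aleph_0$-injectivity on the countably generated ideal $I+K$ to obtain left multiplication by some element, and read off the decomposition. Your write-up is slightly more explicit about the well-definedness step, but the approach is identical.
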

\begin{proof}
It is only to be shown that {\bf l}{\rm(}$I\cap
K${\rm)}$\subseteq${\bf l}{\rm(}$I${\rm)}+{\bf l}{\rm(}$K${\rm)}.
Let $x\in{\bf l}(I\cap K)$. Define a right  $R$-homomorphism $f$
from $I+K$ to $R_{R}$ such that $f(i+k)=xi$, where $i\in I$ and
$k\in K$. Then it is clear that $f$ is well-defined. Since  $I$ and
$K$ are both countably generated right ideals of $R$, $I+K$ is also
a countably generated right ideal of $R$. As $R$ is right
$\aleph_{0}$-injective, $f$ can be extended to a homomorphism from
$R_{R}$ to $R_{R}$. Hence there exist an element $c\in R$ such that
$f=c\cdot$. Thus, by the definition of $f$, $c\in {\bf l} (K)$ and
$x-c\in {\bf l} (I)$. So $x=(x-c)+c\in {\bf l} (I)+{\bf l} (K)$.
\end{proof}
Recall that a ring $R$ is called right {\it Kasch} if each simple
right $R$-module can embed into $R_{R}$. Or equivalently, every
maximal right ideal of $R$ is a right annihilator. Left Kasch rings
can be defined similarly.
\begin{prop}\label{prop 3}
If R is right Kasch and right $\aleph_{0}$-injective, then every
countably generated right ideals of R is a right annihilator.
\end{prop}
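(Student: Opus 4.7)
The plan is to prove the nontrivial inclusion $\mathbf{r}(\mathbf{l}(T))\subseteq T$ by contradiction. Let $T=\sum_{k=1}^{\infty}a_{k}R$ be a countably generated right ideal of $R$, and suppose, towards a contradiction, that there exists $x\in\mathbf{r}(\mathbf{l}(T))\setminus T$. My strategy is to manufacture an element $c\in\mathbf{l}(T)$ with $cx\ne 0$, which will contradict $x\in\mathbf{r}(\mathbf{l}(T))$.

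To build $c$, I would first produce a well-chosen homomorphism on the enlarged ideal $T+xR$. Since $x\notin T$, the cyclic right module $(T+xR)/T$ is nonzero, hence by Zorn's lemma contains a maximal proper submodule of the form $M/T$ with $T\subseteq M\subsetneq T+xR$. The quotient $(T+xR)/M$ is then a simple right $R$-module, and crucially $x\notin M$ (otherwise $xR\subseteq M$ would force $T+xR\subseteq M$). Now right Kasch kicks in: the simple module $(T+xR)/M$ embeds into $R_R$ via some monomorphism $\phi$. Composing with the canonical surjection, set $f:=\phi\circ\pi:T+xR\to R_R$. By construction $f(T)=0$ while $f(x)=\phi(x+M)\ne 0$.

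Next I would apply right $\aleph_{0}$-injectivity. The ideal $T+xR=xR+\sum_{k=1}^{\infty}a_{k}R$ is countably generated, so $f$ extends to an endomorphism of $R_R$, and standard arguments give $c\in R$ with $f=c\cdot$. Then $ca_{k}=f(a_{k})=0$ for every $k$, so $c\in\mathbf{l}(T)$; and $cx=f(x)\ne 0$. This contradicts $x\in\mathbf{r}(\mathbf{l}(T))$, completing the proof.

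I do not anticipate a serious obstacle: the argument is essentially a three-step combo (Zorn to find a maximal submodule, Kasch to land the simple quotient inside $R$, then $\aleph_{0}$-injectivity to extend). The only place requiring mild care is verifying $x\notin M$, so that $f(x)\ne 0$, and confirming that $T+xR$ is still countably generated, which is immediate from adding a single generator to $T$.
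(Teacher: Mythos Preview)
Your proof is correct and follows essentially the same route as the paper's: assume $x\in\mathbf{r}(\mathbf{l}(T))\setminus T$, pass to a maximal submodule of the cyclic module $(T+xR)/T$, use right Kasch to embed the simple quotient into $R_R$, then extend the resulting map by right $\aleph_0$-injectivity to obtain $c\in\mathbf{l}(T)$ with $cx\neq 0$. The only cosmetic difference is that the paper observes $(T+xR)/T$ is finitely generated (indeed cyclic) to obtain the maximal submodule, rather than invoking Zorn's lemma directly.
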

\begin{proof}
Let $I$ be a countably generated right ideal of $R$. If $I$ is not a
right annihilator, then there exists a nonzero element $x\in R$ such
that $x\in {\bf r}{\bf l}(I)\backslash I$. Now let $K=I+xR$. Then
$\overline{K}=K/I$ is finitely generated. Hence $\overline{K}$ has a
maximal submodule $\overline{M}$. Since $R$ is right Kasch,
$\overline{K}/\overline{M}$ can embed into $R_{R}$. Thus there
exists a homomorphism $f$ from $K$ to $R_{R}$ with $f(I)=0$ and
$f(x)\neq 0$. Since $R$ is right $\aleph_{0}$-injective, $f=c\cdot$
for some $c\in R$. So $c\in {\bf l}(I)$. Since $x\in {\bf r}{\bf
l}(I)$, $f(x)=cx=0$. This is a contradiction.
\end{proof}
\begin{thm}\label{thm 4}
Let R be a right $\aleph_{0}$-injective ring. For any idempotent
$e\in R$ with ReR=R, the corner ring eRe is also right
$\aleph_{0}$-injective.
\end{thm}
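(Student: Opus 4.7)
The plan is to transfer the extension problem from the corner $eRe$ to the ambient ring $R$ via a Morita-style decomposition of the identity. Since $ReR = R$, I would first fix elements $u_1,\dots,u_m,v_1,\dots,v_m \in R$ with $1 = \sum_{j=1}^{m} u_j e v_j$. Given a countably generated right ideal $T = \sum_{k\ge 1} a_k (eRe)$ of $eRe$ and an $eRe$-linear map $f : T \to eRe$, I would form $TR \subseteq R$. Using $a_k = a_k e$ and $eReR = eR$, one checks $a_k\cdot eRe\cdot R = a_k R$, so $TR = \sum_{k\ge 1} a_k R$ is a countably generated right ideal of $R$. The natural candidate extension is $g : TR \to R$ given by $g\bigl(\sum a_k r_k\bigr) = \sum f(a_k) r_k$.

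The main obstacle is well-definedness of $g$: if $\sum a_k r_k = 0$ in $R$, we must show $\sum f(a_k) r_k = 0$. The key trick is to use $e$ to convert arbitrary scalars in $R$ into scalars in $eRe$, against which $f$ is linear. For each $j$, multiplying the relation on the right by $u_j e$ and writing $a_k r_k u_j e = a_k(e r_k u_j e)$ (valid since $a_k = a_k e$) gives an $eRe$-linear relation $\sum a_k(e r_k u_j e) = 0$ inside $T$. Applying $f$ yields $\sum_k f(a_k)(e r_k u_j e) = 0$ for every $j$. Since $f(a_k) = f(a_k) e$ and $\sum_j u_j e v_j = 1$, I would then compute
\[
\sum_k f(a_k) r_k \;=\; \sum_{k,j} f(a_k)\, r_k u_j e v_j \;=\; \sum_j \Bigl(\sum_k f(a_k)(e r_k u_j e)\Bigr) v_j \;=\; 0.
\]

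Once $g$ is well-defined it is clearly right $R$-linear, so the right $\aleph_0$-injectivity of $R$ produces some $c \in R$ with $g(x) = cx$ for all $x \in TR$. To extract an extension valued in $eRe$, I would note that for $t \in T$ we have $t = ete$ and $f(t) \in eRe$, so $f(t) = g(t) = ct = e(ct) = ec(et) = (ece)\,t$. Since $ece \in eRe$ and left multiplication by $ece$ maps $eRe$ into $eRe$, the assignment $x \mapsto (ece)x$ is the desired extension of $f$ to all of $eRe$, proving $eRe$ is right $\aleph_0$-injective.
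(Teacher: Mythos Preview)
Your proof is correct and follows essentially the same strategy as the paper: extend $f$ to an $R$-linear map $TR\to R$, verify well-definedness using $ReR=R$, invoke the right $\aleph_0$-injectivity of $R$ to get a multiplier $c$, and then compress to $ece\in eRe$. The only cosmetic difference is in the well-definedness step: the paper shows $\bigl(\sum f(a_k)r_k\bigr)re=0$ for every $r\in R$ and then uses $ReR=R$ to conclude the sum vanishes, whereas you fix a decomposition $1=\sum_j u_j e v_j$ and compute directly---but these are two phrasings of the same argument.
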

\begin{proof} Let $S=eRe$ and $\theta : T\rightarrow S$ be a right
$S$-homomorphism from a countably generated right ideal  $T$ of $S$
to $S_{S}$. Define $\bar{\theta}:TR\rightarrow R_{R}$ by
$\bar{\theta}(\sum t_{i}r_{i})=\sum \theta (t_{i})r_{i}$, $t_{i}\in
T$, $r_{i}\in R$. Assume $\sum t_{i}r_{i}=0$. For any $r\in R$,
$0=\sum t_{i}r_{i}re=\sum t_{i}(er_{i}re)$. So $0=\sum \theta
(t_{i})(er_{i}re)=[\sum \theta (t_{i})r_{i}]re$. Since $ReR=R$,
 it is clear that $\sum \theta (t_{i})r_{i}=0$. Hence $\bar{\theta}$ is a well-defined
right $R$-homomorphism. Since $T$ is a countably generated right
ideal of $S$, $TR$ is also a countably generated right ideal of $R$.
As $R$ is right $\aleph_{0}$-injective, $\bar{\theta}=c\cdot$ for
some $c\in R$. Then for each $ t\in T$,
$\theta(t)=e\theta(t)=e\bar\theta(t)=ect=(ec)et=(ece)t$. Hence
$\theta=(ece)\cdot$, as required.
\end{proof}
\begin{rem}\label{rem 5}
  {\rm The condition that $ReR=R$ in the
above theorem is necessary. For example (see \cite[Example 9]{K95}),
let $R$ be the algebra of matrices over a field $K$ of the form
{$R=\left[
\begin{array}{ccccccc}
a & x & 0 & 0& 0 & 0 \\
0 & b & 0 & 0& 0 & 0 \\
0 & 0 & c & y& 0 & 0 \\
0 & 0 & 0 & a& 0 & 0 \\
0 & 0 & 0 & 0& b & z \\
0 & 0 & 0 & 0& 0 & c \\
\end{array}
\right]$, $a,b,c,x,y,z\in K$.} \\ Set
$e=e_{11}+e_{22}+e_{44}+e_{55}$, which is a sum of canonical matrix
units. It is clear that $e$ is an idempotent of $R$ such that
$ReR\neq R$.  $R$ is right $\aleph_{0}$-injective, but $eRe$ is not
right $\aleph_{0}$-injective.}
\end{rem}

\begin{proof} \cite[Example 9]{K95} shows that $R$ is a $QF$ ring and $eRe$
is not a $QF$ ring. Since $R$ is $QF$, $R$ is right self-injective
and $eRe$ is left noetherian. So $R$ is right
$\aleph_{0}$-injective. If $eRe$ is right $\aleph_{0}$-injective,
then $eRe$ is $QF$ by Theorem 14. This is a contradiction.
\end{proof}
It is natural to ask whether right $\aleph_{0}$-injectivity is a
Morita invariant.
\begin{que}\label{que 6}
If R is right $\aleph_{0}$-injective, is M$_{n\times n}$(R) ($n\geq
2$) right $\aleph_{0}$-injective?
\end{que}
The method in the proof of the following theorem is owing to
\cite[Theorem 1]{NPY00}
\begin{thm}\label{th 7}
The following are equivalent for a ring R and an integer
$n\geq1$:\\
{\rm (1)} M$_{n}${\rm(}R{\rm)} is right $\aleph_{0}$-injective.\\
{\rm (2)} For each countably generated right R-submodule T of
R$_{n}$, every R-linear map $\gamma$:
T$\rightarrow$ R can be extended to R$_{n}$$\rightarrow$ R.\\
{\rm (3)} For each countably generated right R-submodule T of
R$_{n}$, every R-linear map $\gamma$: T$\rightarrow$ R$_{n}$ can be
extended to R$_{n}$$\rightarrow$ R$_{n}$.
\end{thm}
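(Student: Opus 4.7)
The plan is to follow the template of \cite[Theorem 1]{NPY00}, replacing ``finitely generated'' by ``countably generated'' throughout. The implication $(2)\Leftrightarrow(3)$ is immediate from coordinate decomposition, while $(1)\Leftrightarrow(3)$ rests on the standard Morita-style correspondence between right ideals of $M_n(R)$ and right $R$-submodules of $R^n\cong M_n(R)e_{11}$.

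For $(2)\Leftrightarrow(3)$: a map $\gamma:T\to R^n$ decomposes into its components $\gamma_i=\pi_i\circ\gamma:T\to R$, and $\gamma$ extends to a map $R^n\to R^n$ iff each $\gamma_i$ extends to a map $R^n\to R$; conversely, given $\gamma:T\to R$ one forms $\hat\gamma(t)=(\gamma(t),0,\ldots,0):T\to R^n$, applies (3), and projects back onto the first coordinate. This step needs no modification from the finitely generated case.

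For $(1)\Leftrightarrow(3)$: I would use the bijection between right ideals $I$ of $M_n(R)$ and right $R$-submodules $T$ of $R^n=M_n(R)e_{11}$ given by $I\mapsto Ie_{11}$ (the set of first columns of elements of $I$). If $T=\sum_{k\geq 1}t_kR$, then the matrices $T_k$ with first column $t_k$ and zero elsewhere generate the corresponding right ideal $I$ as a right $M_n(R)$-module; conversely, if $I=\sum_{k\geq 1}A_kM_n(R)$, then the columns of the $A_k$ furnish countably many $R$-module generators of $T$. So countable generation is preserved in both directions. A right $M_n(R)$-linear map $\phi:I\to M_n(R)$ restricts to a right $R$-linear map $\gamma:T\to R^n$ via $\gamma(v)=\phi(v)$ (noting that $v=ve_{11}$ forces $\phi(v)\in M_n(R)e_{11}$); conversely, any such $\gamma$ uniquely determines $\phi$ by $M_n(R)$-linearity together with the decomposition $I=\bigoplus_{j=1}^{n}Ie_{jj}$ and the identification $Ie_{jj}=Ie_{11}\cdot E_{1j}$. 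Finally, an extension of $\phi$ to all of $M_n(R)$ is left multiplication by some $C\in M_n(R)$, which is equivalent to $\gamma$ extending to $\tilde\gamma(v)=Cv$ on all of $R^n$.

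The main obstacle is purely bookkeeping---keeping rows and columns straight, checking that the right $R$-action on $M_n(R)e_{11}$ coming from the isomorphism $R\cong e_{11}M_n(R)e_{11}$ coincides with the coordinate-wise action on $R^n$, and verifying that countable generation transfers in both directions---but once the correspondence is set up, the equivalences become formal, and no new idea beyond the finitely generated analogue of \cite[Theorem 1]{NPY00} is required.
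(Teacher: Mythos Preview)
Your proposal is correct and follows essentially the same approach as the paper: the paper also works via the column correspondence $I\leftrightarrow T_0$ (written as $I=[T_0\ T_0\ \cdots\ T_0]$) between right ideals of $M_n(R)$ and $R$-submodules of $R^n$, and handles $(2)\Leftrightarrow(3)$ by projecting onto coordinates. The only cosmetic difference is that the paper organizes the argument as the cycle $(1)\Rightarrow(2)\Rightarrow(3)\Rightarrow(1)$ rather than your pair of biconditionals.
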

\begin{proof}
We prove for the case $n$=2. The others are analogous. \\
(1)$\Rightarrow$(2).\\ Given $\gamma$: $T\rightarrow R$ where $T$ is
a countably generated right $R$-submodule of $R_{2}$, consider the
countably generated right ideal $\overline T$=$[T~
T]=\{[\alpha~\beta]|\alpha, \beta\in T\}$ of $M_{2}(R)$. The map
$\overline\gamma$ : $\overline T$$\rightarrow$$M_{2}(R)$ defined by
\begin{center}
$\overline\gamma([\alpha~\beta])$=$\left[
                         \begin{array}{cc}
                            \gamma(\alpha)&\gamma(\beta)\\
                            0&0
                            \end{array}\right], \alpha, \beta\in T$ \end{center}is $M_{2}(R)$-linear.
                            By (1), there exists $C\in M_{2}(R)$
such that $\overline\gamma=C\cdot$. So $\gamma= \alpha\cdot$, where
$\alpha$ is the first row of
$C$. Hence $\gamma$ can be extended to a homomorphism from $R_{2}$ to $R$.\\
(2)$\Rightarrow$(3).\\ Given (2), consider $\gamma$: $T\rightarrow
R_{2}$ where $T$ is a countably generated right $R$-submodule of
$R_{2}$. Let $\pi_{i}: R_{2}\rightarrow R$ be the $i$th projection,
$i=1,2$. Then (2) provides an $R$-linear map $\gamma_{i}:
R_{2}\rightarrow R$ extending $\pi_{i}\circ\gamma$, $i=1, 2$. Thus
$\overline\gamma:R_{2}\rightarrow R_{2}$ extends $\gamma$ where
$\overline\gamma(\overline x)=[\gamma_{1}(\overline x)~
\gamma_{2}(\overline x)]^{T}$,
 $\overline x\in R_{2}$.\\
(3)$\Rightarrow$(1).\\
 Write $S=M_{2}(R)$, consider $\gamma$:
$T\rightarrow S_{S}$ where $T$ is a countably generated right ideal
of $S$. Then it is easy to prove that $T$=$[T_{0}~ T_{0}]$ where
$T_{0}=\{\overline x\in R_{2} ~|~ [\overline x ~0]\in T\}$ is a
right countably generated right $R$-submodule of $R_{2}$. For
$\overline x\in T_{0}$, the $S$-linearity of $\gamma$ shows that
$\gamma[\overline x~0]=\gamma([\overline x~0]\left[\begin{array}{cc}
                            1&0\\
                            0&0
                            \end{array}\right])=\gamma([\overline
                            x~0])\left[\begin{array}{cc}
                            1&0\\
                            0&0
                            \end{array}\right]
                            =[\overline y~0]$ for
some $\overline y\in R_{2}$. Writing $\overline
y=\gamma_{0}(\overline x)$ yields an $R$-linear map $\gamma_{0}$:
$T_{0}\rightarrow R_{2}$ such that $\gamma[\overline
x~0]=[\gamma_{0}(\overline x)~0]$, $\overline x\in T_{0}$. Then
$\gamma_{0}$ extends to an $R$-linear map $\overline \gamma:
R_{2}\rightarrow R_{2}$ by (3). Hence $\gamma_{0}=C\cdot$ for some
$C\in S$. If $[\overline x~\overline y]\in T$ it follows that
$\gamma([\overline x~\overline y])=\gamma([\overline x~0]+[\overline
y~0]\left[\begin{array}{cc}
                            0&1\\
                            0&0
                            \end{array}\right])=[\gamma_{0}(\overline x)~0]+[\gamma_{0}(\overline y)~0]\left[
                         \begin{array}{cc}
                            0&1\\
                            0&0
                            \end{array}\right]=[C\overline x~C\overline y]=C[\overline x~\overline y]$. This shows
$\gamma=C\cdot$.
\end{proof}
Recall that a right ideal $L$ of a ring $R$ is called {\it small}
if, for any proper right ideal $L'$ of $R$, $L+L'\neq R_{R}$. Let
$I$ be a right ideal of $R$. $I$ is said to \emph{lie over} a direct
summand of $R_{R}$ if there exists an idempotent $e\in R$ such that
$I=eR\oplus (I\cap(1-e)R)$, where $I\cap(1-e)R$ is a small right
ideal of $R$.

\begin{lem}\label{lem 8}
\cite[Corollary 2.10]{N76} A ring R is semiperfect if and only if
every countably generated right ideal of R lies over a direct
summand of $R_{R}$.
\end{lem}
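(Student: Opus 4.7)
The plan is to reduce to the finitely generated (indeed principal) case of this very statement, which is the classical characterization of semiperfect rings: $R$ is semiperfect if and only if every principal right ideal of $R$ lies over a direct summand of $R_{R}$. The direction $(\Leftarrow)$ of the lemma is then immediate, since any principal right ideal is countably generated and so the hypothesis specializes to the principal case. The substance is in $(\Rightarrow)$.

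Assume $R$ is semiperfect; let $I=\sum_{k\geq 1}a_{k}R$ and set $I_{n}=\sum_{k=1}^{n}a_{k}R$ for each $n$. Since $R/J$ is semisimple and hence artinian, the ascending chain $\{(I_{n}+J)/J\}_{n\geq 1}$ in $R/J$ stabilizes at some index $m$, giving $I+J=I_{m}+J$. The finitely generated case applied to $I_{m}$ then supplies an idempotent $e\in I_{m}\subseteq I$ with $I_{m}=eR\oplus S_{m}$ and $S_{m}\subseteq J$.

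I would then verify that this \emph{same} idempotent $e$ decomposes all of $I$. Given $a\in I$, the stabilization yields $a=i+j$ with $i\in I_{m}$ and $j\in J$; writing $i=er+s$ with $s\in S_{m}\subseteq J$ produces
\[ a = e(r+s+j) + (1-e)(s+j), \]
and the summand $(1-e)(s+j)$ lies in $I\cap (1-e)R$ (its membership in $I$ follows since $a$, $er$, and $e(s+j)$ all belong to $I$). This yields $I=eR\oplus\bigl(I\cap (1-e)R\bigr)$. For smallness, take $b\in I\cap (1-e)R$; writing $b=er+s+j$ as before and using $eb=0$ forces $er=-e(s+j)\in eJ\subseteq J$, whence $b=er+s+j\in J$. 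Thus $I\cap (1-e)R\subseteq J$, and so it is small in $R_{R}$.

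The main obstacle is precisely the step from finitely to countably generated ideals. The chain-stabilization trick works because semiperfectness forces $R/J$ to be artinian, so the ascending chain in $R/J$ collapses into a finite one; the tail of $I$ beyond $I_{m}$ lives in $J$ and is absorbed into the small summand. This artinian hypothesis on $R/J$ is essential: for merely semiregular rings -- where only the principal case of the statement is available -- the countable version can fail.
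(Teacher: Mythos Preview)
The paper does not supply a proof of this lemma; it is quoted from Nicholson. Your forward direction $(\Rightarrow)$ is correct and well organized: the stabilization of the chain $\{(I_{n}+J)/J\}$ in the artinian ring $R/J$ is exactly the right device to reduce to a finitely generated $I_{m}$, and your verification that the idempotent $e\in I_{m}$ still decomposes all of $I$ with $I\cap(1-e)R\subseteq J$ is clean.

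The backward direction $(\Leftarrow)$, however, has a genuine gap. The ``classical characterization'' you invoke --- that $R$ is semiperfect if and only if every \emph{principal} right ideal lies over a direct summand --- is false as stated: that condition characterizes \emph{semiregular} rings, not semiperfect ones. (You essentially acknowledge this in your closing paragraph.) An infinite direct product of fields is von Neumann regular, so every principal right ideal is already a direct summand, yet such a ring is not semiperfect. Thus specializing the hypothesis to principal ideals only yields semiregularity, and the implication to semiperfectness is missing.

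To close the gap one must use the countably-generated hypothesis in an essential way. One route: having obtained semiregularity, suppose for contradiction that $\{e_{i}\}_{i\geq 1}$ is an infinite family of nonzero orthogonal idempotents in $R$, and set $I=\bigoplus_{i}e_{i}R$. By hypothesis $I=fR\oplus S$ with $f=f^{2}\in I$ and $S\subseteq J$. Since $f$ lies in some finite partial sum $\bigoplus_{i\leq n}e_{i}R$, for every $k>n$ one has $e_{k}f=0$; writing $e_{k}=fr_{k}+s_{k}$ with $s_{k}\in J$ and multiplying on the left by $e_{k}$ gives $e_{k}=e_{k}s_{k}$, whence $e_{k}(1-s_{k})=0$ and $e_{k}=0$, a contradiction. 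So $R$ has no infinite orthogonal family of idempotents; since idempotents lift modulo $J$, the same holds in the von Neumann regular ring $R/J$, forcing $R/J$ to be semisimple and $R$ semiperfect.
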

\begin{thm}\label{th 9}
Let R be semiperfect. If every homomorphism from a countably
generated small right ideal of R to $R_{R}$ can be extended to one
from $R_{R}$ to $R_{R}$, then R is right $\aleph_{0}$-injective.
\end{thm}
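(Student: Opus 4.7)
The plan is to reduce an arbitrary countably generated right ideal $I$ to a countably generated \emph{small} right ideal by peeling off a projective direct summand supplied by \lemref{lem 8}.

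Given a right $R$-linear map $f\colon I \to R_R$ with $I$ countably generated, I would apply \lemref{lem 8} to decompose $I = eR \oplus K$, where $e^2 = e \in R$ and $K = I \cap (1-e)R$ is small in $R_R$. Since $K$ is a direct summand of the countably generated module $I$ (via the projection along $eR$), $K$ is itself countably generated, so the hypothesis of the theorem applies to any homomorphism $K \to R_R$.

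The key step is the following reduction. I would define $h\colon K \to R_R$ by $h(k) = f(k) - f(e)k$; this is right $R$-linear. By hypothesis $h$ extends to $R_R$, so there exists $c \in R$ with $h(k) = ck$ for all $k \in K$. I then claim that $d := f(e) + c(1-e)$ gives $f = d \cdot$. Since $e = e^2$ and $f$ is right $R$-linear, $f(e) = f(e)e$, which yields $d\cdot(er) = f(e)er + c(1-e)er = f(e)er = f(e)r = f(er)$ for every $er \in eR$. For $k \in K \subseteq (1-e)R$ one has $ek = 0$ and $(1-e)k = k$, so $d \cdot k = f(e)k + ck = f(e)k + h(k) = f(k)$. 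Hence $f$ and $d\cdot$ agree on both summands of $I = eR \oplus K$, and the extension is produced.

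I do not expect a serious obstacle beyond careful bookkeeping with the idempotent: the identities $f(e)e = f(e)$ (from right $R$-linearity), $ek = 0$ and $(1-e)k = k$ for $k \in K$ (from $K \subseteq (1-e)R$), and the observation that $K$ is countably generated as a direct summand of $I$. The substantive ingredients are exactly \lemref{lem 8} (to pass from an arbitrary countably generated right ideal to a small one) together with the given extension property on countably generated small right ideals.
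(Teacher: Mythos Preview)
Your proof is correct and follows essentially the same approach as the paper: decompose $I = eR \oplus K$ via \lemref{lem 8}, apply the hypothesis to the countably generated small piece $K$, and patch the result with $f|_{eR}$. The only cosmetic difference is that you compute the extending element $d = f(e) + c(1-e)$ explicitly, whereas the paper writes the extension abstractly as $F(x) = f(ex) + g((1-e)x)$ for an extension $g$ of $f|_{K}$.
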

\begin{proof}
 Let $I$ be a countably generated right ideal of $R$ and $f$ be a
homomorphism from $I$ to $R_{R}$.  By the above lemma, $I=eR\oplus
K$, where $e$ is an idempotent of $R_{R}$ and $K=I\cap(1-e)R$ is a
small right ideal of $R$. Since $I$ is countably generated, $K$ is
also countably generated. By hypothesis, there exists a homomorphism
$g$ from $(1-e)R$ to $R_{R}$ such that $g|_{K}=f|_{K}$. For each
$x\in R$, define $F(x)=f(x_{1})+g(x_{2})$ where $x_{1}=ex$ and
$x_{2}=(1-e)x$. It is clear that $F|_{I}=f$.

\end{proof}
Now we turn to the main theorem of this note. First  look at some
lemmas.
\begin{lem}\label{lem 10}
If R is a left $\aleph_{0}$-injective ring with ACC on
right annihilators, then R is left finite dimensional.
\end{lem}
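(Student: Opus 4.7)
The plan is to argue by contradiction. Suppose $R$ is not left finite dimensional; then there exists a family of nonzero elements $\{x_i\}_{i\geq 1}$ in $R$ such that the sum of principal left ideals $L=\bigoplus_{i=1}^{\infty} Rx_i$ is direct inside ${}_{R}R$. In particular $L$ is a countably generated left ideal of $R$, putting us in a position to apply left $\aleph_0$-injectivity.

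The first half of the argument uses only the ACC hypothesis to pin down the annihilator structure far out in the sum. Consider the descending family of tails $T_n=\bigoplus_{i\geq n}Rx_i$, each of which is still a countably generated left ideal. Since taking right annihilators reverses inclusions, we obtain an ascending chain of right annihilators
\[
{\bf r}(T_1)\subseteq {\bf r}(T_2)\subseteq\cdots,
\]
which must stabilize at some index $n_0$ by hypothesis. From $T_{n_0}=Rx_{n_0}\oplus T_{n_0+1}$ we get ${\bf r}(T_{n_0})={\bf r}(x_{n_0})\cap{\bf r}(T_{n_0+1})$, so the equality ${\bf r}(T_{n_0})={\bf r}(T_{n_0+1})$ translates into the crucial inclusion ${\bf r}(T_{n_0+1})\subseteq{\bf r}(x_{n_0})$.

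The second half uses left $\aleph_0$-injectivity to manufacture an element that violates this inclusion. Because the sum $L=\bigoplus Rx_i$ is direct, each element of $L$ has a unique representation $\sum r_i x_i$, so the assignment
\[
f\Bigl(\sum r_i x_i\Bigr)=r_{n_0}x_{n_0}
\]
defines a well-defined left $R$-homomorphism $f:L\to R$. By left $\aleph_0$-injectivity there is $c\in R$ with $f=\,\cdot\, c$, whence $x_{n_0}c=x_{n_0}$ and $x_i c=0$ for every $i\neq n_0$. The latter places $c$ in $\bigcap_{i>n_0}{\bf r}(x_i)={\bf r}(T_{n_0+1})\subseteq{\bf r}(x_{n_0})$, forcing $x_{n_0}c=0$. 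Combining with $x_{n_0}c=x_{n_0}$ gives $x_{n_0}=0$, a contradiction.

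The genuinely delicate point, rather than any computational obstacle, is ensuring that the projection-type map $f$ is well-defined; this is exactly where the directness of the sum, as opposed to merely a sequence of nonzero elements, is used. The plan leverages both hypotheses essentially: the ACC forces the right annihilators of the tails to collapse onto that of $x_{n_0}$, while $\aleph_0$-injectivity supplies the single ring element $c$ realizing the projection onto the $n_0$-th summand.
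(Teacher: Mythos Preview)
Your argument is correct and shares the same skeleton as the paper's: assume an infinite independent family $\{Rx_i\}$, use ACC on right annihilators to stabilize the chain ${\bf r}(T_1)\subseteq{\bf r}(T_2)\subseteq\cdots$ and extract the inclusion ${\bf r}(T_{n_0+1})\subseteq{\bf r}(x_{n_0})$, then deploy left $\aleph_0$-injectivity to force $x_{n_0}=0$. The one genuine difference is in how injectivity enters. The paper invokes the left-hand analogue of its Proposition~2, namely ${\bf r}(I\cap K)={\bf r}(I)+{\bf r}(K)$ for countably generated left ideals, with $I=T_{n_0+1}$ and $K=Rx_{n_0}$; since $I\cap K=0$ this gives $R={\bf r}(T_{n_0+1})+{\bf r}(x_{n_0})={\bf r}(x_{n_0})$ in one line. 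You instead build the projection $L\to Rx_{n_0}$ explicitly and realize it as $\cdot\,c$, which is effectively an inlined, hands-on version of that same proposition. Your route is more self-contained and makes the role of the extension element $c$ visible; the paper's is terser once the annihilator formula is available as a black box. One small wording point: the coefficients $r_i$ in $\sum r_i x_i$ need not be unique, only the summands $r_i x_i\in Rx_i$ are; since your $f$ outputs $r_{n_0}x_{n_0}$ rather than $r_{n_0}$, this does no harm, but you may want to phrase the well-definedness accordingly.
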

\begin{proof}
Assume $R$ is not left finite dimensional. Then there are nonzero
elements $a_{i}\in R,i=1,2,\ldots,$  such that
$\{Ra_{i}\}_{i=1}^{\infty}$ is an independent family of proper left
ideals of $R$. Let $I_{k}=\oplus_{i=k}^{\infty}Ra_{i},k=1,2,\ldots$.
Then {\bf r}($I_{1}$) $\subseteq${\bf r}($I_{2}$)$\subseteq\cdots$.
Since $R$ satisfies \emph{ACC} on right annihilators, there exists
$n\in \mathbb{N}$ such that ${\bf r}(I_{n})={\bf r}(I_{n+1})$. As
$I_{n}=I_{n+1}\oplus Ra_{n}$, we have {\bf r}($I_{n}$)={\bf
r}($I_{n+1}$)$\cap${\bf r}($a_{n}$). So  {\bf r}($I_{n}$)
$\subseteq${\bf r}($a_{n}$). Since $R$ is left
$\aleph_{0}$-injective, by the symmetry of Proposition 2, $R$={\bf
r}(0)={\bf r}($I_{n+1}\cap Ra_{n}$)= {\bf r}($I_{n+1}$)+{\bf
r}($a_{n}$)={\bf r}($I_{n}$)+{\bf r}($a_{n}$)={\bf r}($a_{n}$). Thus
$a_{n}=0$. This is a contradiction.
\end{proof}
Recall that a ring $R$ is called left \emph{P-injective}
(\emph{2-injective}) if every homomorphism from a principal
(2-generated) left ideal of $R$ to $_{R}R$ can be extended to one
from $_{R}R$ to $_{R}R$.
\begin{lem}\label{lem 11}
\cite[Theorem 3.3]{NY95}If $R$ is left $P$-injective and left finite
dimensional, then $R$ is semilocal.
\end{lem}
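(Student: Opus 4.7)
The plan is to pin down $J(R)$ as the left singular ideal using left $P$-injectivity, and then to use finite left Goldie dimension to promote the resulting left nonsingular quotient to a semisimple artinian ring, which is exactly what ``semilocal'' means.

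First, I would prove $J(R) = Z({}_R R) := \{a \in R : \mathbf{l}(a) \subseteq^{ess} {}_R R\}$. The containment $Z({}_R R) \subseteq J(R)$ is standard and does not use $P$-injectivity. For the reverse, suppose $a \in J(R)$ but $\mathbf{l}(a)$ is not essential; choose a nonzero $b \in R$ with $Rb \cap \mathbf{l}(a) = 0$ and define $\varphi : Rba \to {}_R R$ by $\varphi(xba) = xb$. This is well-defined because $xba = 0$ forces $xb \in Rb \cap \mathbf{l}(a) = 0$, and it is left $R$-linear. By left $P$-injectivity, $\varphi$ is right multiplication by some $c \in R$, giving $b = \varphi(ba) = bac$, hence $b(1 - ac) = 0$. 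Since $a \in J(R)$ makes $1 - ac$ a unit, $b = 0$, contradicting $b \neq 0$.

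Consequently $\bar R := R/J$ is left nonsingular, and finite left Goldie dimension descends from $R$ to $\bar R$. Goldie's theorem then produces a semisimple artinian classical left quotient ring $Q$ of $\bar R$, and the remaining task is to show $\bar R = Q$. For this I would try to transfer left $P$-injectivity across the quotient, so that for a left regular $\bar a \in \bar R$ the identity $\bar R \bar a = \mathbf{l}(\mathbf{r}(\bar a))$, combined with semiprimeness of $\bar R$ (which forces $\mathbf{r}(\bar a) = 0$ as well), yields $\bar R \bar a = \bar R$; Dedekind-finiteness, which follows from finite Goldie dimension, upgrades one-sided to two-sided invertibility and gives $\bar R = Q$.

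The hard part I anticipate is exactly this transfer of left $P$-injectivity to $\bar R$, since it requires a careful comparison of annihilators in $R$ with those in $\bar R$, and is where the proof of \cite[Theorem 3.3]{NY95} must do its most delicate work. I would follow that argument closely on this step rather than try to reinvent it.
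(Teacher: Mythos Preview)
The paper does not give its own proof of this lemma; it simply quotes \cite[Theorem~3.3]{NY95} and uses the result as a black box. There is therefore no in-paper argument to compare your proposal against.

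Your outline is broadly in the spirit of the Nicholson--Yousif argument, and you correctly isolate the crux as the analysis of $\bar R=R/J$. One small correction: the inclusion $Z({}_{R}R)\subseteq J(R)$ is \emph{not} automatic for arbitrary rings and also uses left $P$-injectivity (from $\mathbf{l}(1-a)\cap\mathbf{l}(a)=0$ and essentiality of $\mathbf{l}(a)$ one gets $\mathbf{l}(1-a)=0$, whence $(1-a)R=\mathbf{r}\mathbf{l}(1-a)=R$; since $Z_{l}$ is an ideal this yields $a\in J$). A second point to watch is that $R/Z_{l}(R)$ need not be left nonsingular in general, so the claim that $\bar R$ is nonsingular also requires justification from the hypotheses rather than being immediate. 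Your instinct to lean on \cite{NY95} for the passage from ``$\bar R$ has a semisimple artinian quotient ring'' to ``$\bar R$ is itself semisimple artinian'' is exactly what the present paper does by citing the result wholesale.
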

\begin{lem}\label{lem 12}
 \cite[Theorem 2.7]{GG98}If $R$ is right noetherian and left P-injective, then J
is nilpotent.
\end{lem}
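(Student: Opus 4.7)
The plan is to show first that every element of $J$ is nilpotent (so that $J$ is a nil ideal), and then to invoke Levitzki's theorem---which asserts that every nil one-sided ideal of a right noetherian ring is nilpotent---to conclude that $J$ itself is nilpotent.

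For the first step, fix $a \in J$ and consider the ascending chain $\mathbf{r}(a) \subseteq \mathbf{r}(a^{2}) \subseteq \mathbf{r}(a^{3}) \subseteq \cdots$ of right annihilators. Because $R$ is right noetherian, this chain stabilizes, say at $\mathbf{r}(a^{n}) = \mathbf{r}(a^{n+1})$. Left $P$-injectivity of $R$ is precisely the statement that $\mathbf{l}\mathbf{r}(x) = Rx$ for every $x \in R$, so applying it to $a^{n}$ and $a^{n+1}$ yields $Ra^{n} = \mathbf{l}\mathbf{r}(a^{n}) = \mathbf{l}\mathbf{r}(a^{n+1}) = Ra^{n+1}$. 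Hence $a^{n} = r a^{n+1} = (ra)a^{n}$ for some $r \in R$, i.e., $(1 - ra) a^{n} = 0$. Since $ra \in J$, the element $1 - ra$ is a unit of $R$, and therefore $a^{n} = 0$.

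With $J$ now known to be nil, invoking Levitzki's theorem completes the argument. The main obstacle is really this last step: the nilpotency index produced by the calculation above depends on $a$ (it comes from the stopping time of an ACC chain), so there is no obvious uniform bound on the powers needed to kill elements of $J$. Going from \emph{nil} to \emph{nilpotent} for a two-sided ideal therefore requires genuine input beyond the element-wise computation---either an external appeal to Levitzki's theorem or a reproduction of its classical argument, which combines the right noetherian hypothesis with an analysis of maximal nilpotent right ideals.
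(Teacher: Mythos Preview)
Your argument is correct. The characterization of left $P$-injectivity as $\mathbf{l}\mathbf{r}(x)=Rx$ for all $x\in R$ is standard, the chain argument forcing $a^{n}=0$ for each $a\in J$ is clean, and Levitzki's theorem is exactly the right tool to pass from $J$ nil to $J$ nilpotent under the right noetherian hypothesis.

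As for comparison with the paper: there is nothing to compare. The paper does not prove this lemma; it simply quotes it as \cite[Theorem 2.7]{GG98} and moves on. Your write-up therefore supplies a self-contained proof where the paper offers only a citation. The approach you give---show $J$ is nil via the annihilator chain and $P$-injectivity, then invoke Levitzki---is in fact the standard route to this result, and is essentially how the cited theorem is established in the literature. Your closing paragraph correctly identifies the one nontrivial step (uniformizing the nilpotency index) and correctly attributes it to Levitzki; that is all that is needed here.
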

\begin{lem}\label{lem 13}
\cite[Corollary 3]{R75}If $R$ is a left 2-injective ring with ACC on
left annihilators, then $R$ is QF.
\end{lem}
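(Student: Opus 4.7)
The plan is to derive left Artinianness and left self-injectivity, which together characterize $QF$. The workflow parallels and generalizes the proof of Lemma 10: $2$-injectivity converts statements about intersections of finitely generated left ideals into statements about sums of right annihilators, and the chain condition does the combinatorial work on such annihilators.

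First I would record the left-hand analogue of Proposition 2 in its $2$-generated form: for any $a,b\in R$, $2$-injectivity yields $\mathbf{r}(Ra\cap Rb)=\mathbf{r}(a)+\mathbf{r}(b)$, and a straightforward induction on the number of generators extends this identity to all finitely generated left ideals. Since ACC on left annihilators is equivalent to DCC on right annihilators, the argument of Lemma 10 now runs with ``countably generated'' weakened to ``finitely generated'': if $\{Ra_i\}$ were an infinite independent family of nonzero principal left ideals, the descending chain $\mathbf{r}(Ra_1+\cdots+Ra_k)$ would stabilize at some index $n$, and the extended identity applied to $(Ra_1+\cdots+Ra_n)\cap Ra_{n+1}=0$ would force $\mathbf{r}(a_{n+1})=R$, so $a_{n+1}=0$, a contradiction. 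Thus $R$ has finite left uniform dimension, and since $R$ is left $P$-injective, Lemma 11 forces $R$ to be semilocal.

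Next I would show that $J$ is nilpotent. Unlike Lemma 12, we lack a right noetherian hypothesis, so nilpotency must be extracted from ACC on left annihilators and $2$-injectivity directly. The ACC supplies an $n$ with $\mathbf{l}(J^n)=\mathbf{l}(J^{n+1})$, and the objective is to parlay this into $J^n=0$ by combining the $P$-injective identity $\mathbf{l}\mathbf{r}(a)=Ra$ (for each $a\in R$) with semilocality, which lets one reduce $J^n$ modulo $J^{n+1}$ and cancel via a Nakayama-style argument. Once $R$ is semiperfect with $J$ nilpotent and ACC on left annihilators, a classical theorem of Bass furnishes that $R$ is left Artinian.

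Finally, left Artinianness upgrades $2$-injectivity to full left self-injectivity. By the Hopkins--Levitzki theorem every left ideal of $R$ is finitely generated, and the induction from the first step already extends every homomorphism from a finitely generated left ideal of $R$ into ${}_{R}R$ to an endomorphism of ${}_{R}R$. Thus $R$ is left self-injective, and together with left Artinianness this delivers $QF$. The principal obstacle is the nilpotency step: coercing $J^n=0$ out of purely left-sided hypotheses without a right chain condition is the delicate point, and the place where $2$-injectivity must be deployed in its strongest form.
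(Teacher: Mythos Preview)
The paper does not supply a proof of Lemma~\ref{lem 13}; it is quoted as \cite[Corollary~3]{R75} and then invoked as a black box in the proof of Theorem~\ref{th 14}. There is therefore no in-paper argument to compare your sketch against.

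On the sketch itself, two steps are not actually carried out. First, the ``straightforward induction'' extending $\mathbf{r}(Ra\cap Rb)=\mathbf{r}(a)+\mathbf{r}(b)$ to arbitrary finitely generated left ideals does not follow from $2$-injectivity alone: the mechanism of Proposition~\ref{prop 2} needs the \emph{sum} $I+K$ to lie in the class of ideals from which homomorphisms extend, so already the passage from $2$-generated to $3$-generated ideals asks for $3$-injectivity rather than $2$-injectivity. Without that identity for $(Ra_1+\cdots+Ra_n)\cap Ra_{n+1}$, your finite-uniform-dimension argument, and hence the appeal to Lemma~\ref{lem 11}, does not get off the ground. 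Second, you correctly flag nilpotency of $J$ as ``the delicate point'' but provide no argument, and the follow-up claim that semiprimary plus ACC on left annihilators gives left Artinian ``by a classical theorem of Bass'' is not Bass's theorem; Hopkins--Levitzki needs a one-sided Noetherian hypothesis, which is precisely what remains to be proved. These are the substantive gaps that Rutter's original argument has to bridge, and your outline does not yet indicate how $2$-injectivity (as opposed to full $F$-injectivity) suffices for them.
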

Now we obtain the main theorem.
\begin{thm}\label{th 14}
If $R$ is right noetherian and left $\aleph_{0}$-injective, then $R$
is QF. \end{thm}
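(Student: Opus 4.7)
The plan is to chain the four preceding lemmas together; no new construction is needed, so the task is really to line up the hypotheses in the right order. First I would note that since $R$ is right noetherian, every ascending chain of right ideals (in particular, of right annihilators) stabilizes, so $R$ has ACC on right annihilators. Combined with left $\aleph_{0}$-injectivity, \lemref{lem 10} applies and tells us that $R$ is left finite dimensional.

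Next, since left $\aleph_{0}$-injectivity trivially implies left $P$-injectivity, I would feed the left finite dimensionality just obtained into \lemref{lem 11} to conclude that $R$ is semilocal, i.e.\ $R/J$ is semisimple artinian. Then, because $R$ is right noetherian and left $P$-injective, \lemref{lem 12} gives that $J$ is nilpotent. Thus $R$ is semiprimary. At this point I would invoke the Hopkins--Levitzki theorem: a semiprimary right noetherian ring is right artinian. In particular, $R$ satisfies DCC on right ideals, hence DCC on right annihilators.

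Finally, the map $L\mapsto \mathbf{r}(L)$ is an order-reversing bijection between left and right annihilators, so DCC on right annihilators is equivalent to ACC on left annihilators. Since left $\aleph_{0}$-injectivity implies left $2$-injectivity, \lemref{lem 13} applies and yields that $R$ is \emph{QF}, completing the proof.

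The main obstacle is nothing deep — all the content sits in the cited lemmas — but the one place where care is needed is the passage from ACC on right annihilators to ACC on left annihilators. The naive attempt to read this directly off right noetherianness fails; one really does need the intermediate conclusion that $R$ is semiprimary in order to upgrade right noetherian to right artinian via Hopkins--Levitzki, and only then can one dualize the chain condition on annihilators. Everything else is bookkeeping of implications among the injectivity hypotheses.
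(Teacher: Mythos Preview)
Your proof is correct and follows exactly the same route as the paper's: right noetherian $\Rightarrow$ ACC on right annihilators $\Rightarrow$ (\lemref{lem 10}) left finite dimensional $\Rightarrow$ (\lemref{lem 11}, \lemref{lem 12}) semiprimary $\Rightarrow$ (Hopkins--Levitzki) right artinian $\Rightarrow$ ACC on left annihilators $\Rightarrow$ (\lemref{lem 13}) \emph{QF}. You have in fact spelled out two steps the paper leaves implicit---the appeal to Hopkins--Levitzki and the annihilator duality converting DCC on right annihilators into ACC on left annihilators---so your write-up is slightly more transparent than the original.
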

\begin{proof}
Since $R$ is right noetherian, $R$ satisfies \emph{ACC} on right
annihilators. By Lemma 10, $R$ is left finite dimensional. Since $R$
is left $\aleph_{0}$-injective, $R$ is left $P$-injective. So $R$ is
semilocal and $J$ is nilpotent by Lemma 11 and Lemma 12. Thus $R$ is
semiprimary. Hence $R$ is right artinian. So $R$ satisfies
\emph{ACC} on left annihilators. Then $R$ is \emph{QF} by Lemma 13.
\end{proof}
By Lemma 13, we see that if $R$ is a left $\aleph_{0}$-injective
ring with \emph{ACC} on left annihilators, then $R$ is \emph{QF}. It
is natural to ask the following question:
\begin{que}\label{que 15}
Can right noetherian condition in the above theorem  be weakened to
the condition satisfying ACC on right annihilators?
\end{que}
\begin{rem}\label{rem 16}
The answer is ''yes" if we can show that $J$ is right $T$-nilpotent.
By Lemma 10 and Lemma 11, $R$ is semilocal. If $J$ is right
$T$-nilpotent, then $R$ is right perfect. So $R$ is left GPF (i.e.,
$R$ is left $P$-injective, semiperfect and
$S_{l}\subseteq^{ess}$$_{R}R$). Thus $R$ is left Kasch by
\cite[Theorem 5.31]{NY03}. By \cite[Lemma 2.2]{NY95}, $R$
 is right $P$-injective. So $R$ is left and right mininjective. Recall that a ring $R$ is called right $mininjective$ if
  every homomorphism from a minimal right ideal of $R$ to $R_{R}$ can be extended to one from $R_{R}$
   to $R_{R}$. Left mininjective rings can be defined similarly.
 Then  by \cite[Theorem 2.5]{SC06}, $R$ is \emph{QF}.
\end{rem}
\bigskip
\begin{center}
{ACKNOWLEDGEMENTS}
 \end{center}
\indent\indent The research is supported by the Foundation of
Southeast University \\(No.4007011034).

\end{document}